\theoremstyle{plain}
\newtheorem{theorem}{Theorem}
\newtheorem{lemma}{Lemma}
\newtheorem{proposition}{Proposition}
\newtheorem{corollary}{Corollary}
\theoremstyle{definition}
\newtheorem{definition}{Definition}
\theoremstyle{remark}
\begin{document}

\title{Equivariant Movability of Topological Groups}
\author{Pavel S. Gevorgyan}

\address{Moscow Pedagogical State University}

\email{pgev@yandex.ru}

\begin{abstract}
The equivariant movability of topological spaces with an action of a given topological group $G$ is considered. In particular, the equivariant movability of topological groups is studied. It is proved that a second countable group $G$ is Lie if and only if it is equivariantly movable.
\end{abstract}

\keywords{Equivariant shape, equivariant movability, $G$-space, Lie group}
\subjclass{55P91; 55P55}

\maketitle

\section{Introduction}

The first results on the equivariant movability of $G$-spaces
were obtained in \cite{gev2} and \cite{gev1}.
If $X$ is a $p$-paracompact space and $H$ is a closed subgroup of a topological group $G$, then the $G$-movability of $X$ implies its $H$-movability \cite[Theorem 3.3]{gev1}.
If $X$ is a metrizable $G$-movable space and $H$ is a closed
normal subgroup of the topological group $G$,
then the $H$-orbit space  $X|_H$ with the natural action
of the group $G$ is $G$-movable as well \cite[Theorem 6.1]{gev1}.
In the case of $H=G$, the equivariant movability
of a metrizable $G$-space implies the movability
of the orbit space $X|_G$ \cite[Corollary 6.2]{gev1}.
The  converse is generally false \cite[Example 6.3]{gev1}.
However, if $X$ is metrizable, $G$ is a compact Lie group, and the
action of $G$ on $X$ is free, then
the equivariant movability of the $G$-space $X$ is equivalent to the
movability of the orbit space $X|_G$ \cite[Theorem 7.2]{gev1}.
The $G$-movability of $X$ implies also the movability of the $H$-fixed point
space  $X[H]$ \cite[Theorem 4.1]{gev1}.
In particular, the equivariant movability of a $G$-space $X$
implies the movability of the topological space
$X$ \cite[Corollary 3.5]{gev1}. The converse is not true
even for the cyclic group $Z_2$.
In \cite{gev1}, an example of a  $Z_2$-space $X$
which is movable but not $Z_2$-movable was constructed
\cite[Example 5.1]{gev1}.

The movability of topological groups in classical shape theory
was studied by Keesling \cite{keesl75, keesl74} and by
Kozlovskii with Segal \cite{ks76}.
 In particular, Keesling \cite{keesl74} proved that,
for compact connected Abelian groups, movability
is equivalent to local connectedness. In this paper,
we study the equivariant movability of topological groups; in particular, we
prove that a second countable compact
topological group $G$ is a Lie group if and only if it is
equivariantly movable (Theorem \ref{th-main}).
This theorem provides new examples of spaces which are movable but not equivariantly movable.

\section{Preliminaries on Equivariant Topology and Equivariant Shapes}\label{sec1}

Let $G$ be a topological group. A topological space $X$ is called
a $G$-space if there is a continuous map $\theta :G \times X \to
X$ of the direct product $G \times X$ to $X$, for which we use the
notation $\theta(g, x) =
gx$,
such that
$$ (1) \quad g(hx)=(gh)x\qquad\text{and}\qquad (2) \quad ex=x $$
for $g, h \in G$ and $x\in X$,
where $e$ denotes the identity element of $G$.
The (continuous) map $\theta :G \times X \to X$ is called a
(continuous) action of the group $G$ on the topological space $X$.
An evident example is the so-called trivial action of $G$ on $X$ defined by
$gx=x$ for all $g\in G$ and $x\in X$. Another example is the action
of the group $G$ on itself defined by $(g, x) \to gx$ for all
$g\in G$ and $x\in G$.

Let $H$ be a closed subgroup of a group $G$.
There exists a natural action of the group $G$ on space
$G|H$, which is defined by $g(g'H)=(gg')H$.


A subset $A$ of a $G$-space $X$ is said to be invariant if  $ga\in A$ for
any $g\in
G$ and $a\in A$. Obviously,  an invariant
subset of a $G$-space is itself a $G$-space. If $A$ is an
invariant subset of a $G$-space $X$, then every neighborhood of
$A$ contains an open invariant neighborhood of $A$ (see
\cite[Proposition 1.1.14]{p60}).

Let $X$ and  $Y$ be $G$-spaces. A (continuous) map $f: X\to Y$ is
called a $G$-map, or an equivariant map, if $f(gx)=gf(x)$ for
any $g\in G$ and $x\in X$. Note that the identity map $i:X \to X$
is equivariant, and the composition of any equivariant maps is
equivariant. Therefore, all $G$-spaces and equivariant maps form a
category, which we denote by $\mathtt{Top}^G$.

Let $Z$ be a $G$-space, and let $Y\subseteq Z$ be an invariant
subset. A $G$-retraction of $Z$ to $Y$ is a $G$-map $r:Z \to Y$
such that $r|_Y=1_Y$.

Let $K_G$ be a class of $G$-spaces. A $G$-space $Y$ is called a
$G$-absolute neighborhood retract for the class $K_G$,  or a
$G$-ANR$(K_G)$ (a $G$-absolute retract for the class $K_G$,  or a
$G$-AR$(K_G)$), if $Y\in K_G$ and, whenever $Y$ is a closed
invariant subset of a $G$-space $Z\in K_G$, there exists an
invariant neighborhood $U$ of $Y$ and a $G$-retraction $r:U \to Y$
(there exists a $G$-retraction $r:Z \to Y$).

Let $X$ and $Y$ be $G$-spaces. We say that two equivariant maps, or $G$-maps,
$f_0, f_1 :X\to Y$ are $G$-homotopic,
or equivariantly homotopic, and write $f_0\simeq_G f_1$
if there exists a $G$-map $F:X\times I \to Y$ such that $F(x,0)=f_0(x)$
and $F(x,1)=f_1(x)$ for all $x\in X$ (we assume that $G$ acts
trivially on $I$). The relation $\simeq_G$ is an equivalence relation;
we denote the $G$-homotopy class
of a $G$-map $f$ by $[f]$. In this way, we obtain the category
$H$-$\mathtt{Top}^G$,
whose objects are $G$-spaces and whose morphisms are classes
of $G$-homotopic $G$-maps.

\begin{definition}
An inverse $G$-ANR system $\underline{X}=\{ X_\alpha , p_{\alpha
\alpha '}, A \}$ in $\mathtt{Top}^G$ (where all $X_\alpha$ are $G$-ANRs)
is said to be associated with a $G$-space $X$ if there exist $G$-maps
$p_\alpha:X\to X_\alpha$ such that
$p_\alpha\simeq_G p_{\alpha\alpha'}\circ p_{\alpha'}$ and the following
two conditions hold for every $G$-ANR $P$:

(1) for every $G$-map $f:X\to P$, there is an $\alpha\in A$
and a $G$-map $h_\alpha:X_\alpha \to P$ such that
$h_\alpha\circ p_\alpha\simeq_G f$
(i.e., each $f$ factors through some $X_\alpha$);

(2) if $\varphi\circ p_\alpha \simeq_G \psi\circ p_\alpha$ for some
$G$-maps $\varphi, \psi :X_\alpha \to P$,
then there is an $\alpha'\geqslant \alpha$ such
that $\varphi \circ p_{\alpha\alpha'}\simeq_G \psi \circ p_{\alpha\alpha'}$.
\end{definition}

For every $G$-space $X$, there exists an inverse $G$-ANR
system $\underline{X}=\{ X_\alpha , p_{\alpha \alpha '}, A \}$ associated
with $X$; i.e., the category $H$-ANR$^G$ is dense in $H$-$\mathtt{Top}^G$ \cite{am87}.

\begin{definition}
An inverse $G$-ANR system $\underline{X}=\{ X_\alpha , p_{\alpha
\alpha '}, A \}$ is said to be equivariantly movable, or $G$-movable,
if, for every $\alpha \in A$,
there exists an $\alpha ' \in A$ such that $\alpha '\geqslant \alpha $ and,
for any  $\alpha '' \geqslant \alpha $ ($\alpha '' \in A$),
there exists a $G$-homotopy class $r^{\alpha ' \alpha ''} :
X_{\alpha '} \to X_{\alpha ''}$ for which
$$p_{\alpha \alpha ''} \circ r^{\alpha ' \alpha ''} = p_{\alpha
\alpha '} .$$
\end{definition}

\begin{definition}
A $G$-space $X$ is said to be equivariantly movable, or $G$-movable, if
there exists an equivariantly movable inverse $G$-ANR system
$\underline{X}=\{ X_\alpha , p_{\alpha \alpha '}, A \}$ associated
with $X$.
\end{definition}

Note that the last definition of equivariant movability coincides
with that of ordinary movability for the trivial group $G=\{e\}$.

The reader is referred to the books by K. Borsuk \cite{b72} and by
S. Marde\v{s}i\'{c} and J. Segal \cite{ms82} for general
information about shape theory and to the book by G. Bredon
\cite{br72} for an introduction to compact transfor\-mation groups.

\section{Weakly Equivariantly Shape Comparable $G$-Spaces}

\begin{definition}\label{def-1}
We say that $G$-spaces $X$ and $Y$ are weakly equivariantly shape comparable if there exist $G$-shape morphisms both from
$X$ to $Y$ and from $Y$ to $X$.
\end{definition}

Obviously, this relation is an equivalence.
Therefore, the family of all $G$-spaces splits
into disjoint classes of weakly equivariantly shape comparable
$G$-spaces.
We denote the class of spaces weakly equivariantly
shape comparable with a $G$-space $X$ by $wes(X)$.

Let $wes(*)$ be the weak equivariant shape
comparability class of the one-point $G$-space $\{*\}$.
The following proposition characterizes the class $wes(*)$.

\begin{proposition}
The family of all $G$-spaces with a fixed point is precisely
the weak equivariant shape comparability class $wes(*)$.
\end{proposition}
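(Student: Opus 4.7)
The plan is to establish the two implications of the claimed equality separately.

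The direction ``$X$ has a fixed point implies $X\in wes(*)$'' is straightforward. If $x_{0}\in X^{G}$, then the map $c\colon\{*\}\to X$ given by $c(*)=x_{0}$ is a $G$-map, since $gx_{0}=x_{0}$ for every $g\in G$. It determines a $G$-shape morphism $\{*\}\to X$. The unique map $X\to\{*\}$ is tautologically a $G$-map and determines a $G$-shape morphism in the opposite direction. By Definition~\ref{def-1}, $X$ and $\{*\}$ are weakly equivariantly shape comparable, so $X\in wes(*)$.

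For the reverse direction, suppose $X\in wes(*)$. The constant $G$-shape morphism $X\to\{*\}$ is always available, so the nontrivial content of the hypothesis is the existence of a $G$-shape morphism $\sigma\colon\{*\}\to X$. I will choose an inverse $G$-ANR system $\underline{X}=\{X_{\alpha},p_{\alpha\alpha'},A\}$ associated with $X$, and represent $\{*\}$ by its trivial one-term system. Then $\sigma$ unfolds into a coherent family of $G$-homotopy classes $[f_{\alpha}]\colon\{*\}\to X_{\alpha}$ satisfying $[p_{\alpha\alpha'}]\circ[f_{\alpha'}]=[f_{\alpha}]$; each $f_{\alpha}$ selects a fixed point $x_{\alpha}\in X_{\alpha}^{G}$, and the family $(x_{\alpha})$ is compatible up to $G$-homotopy along the bonding maps.

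The main obstacle lies in passing from this coherent family upstairs to a genuine fixed point of $X$. I intend to realize $\underline{X}$ concretely---for instance by embedding $X$ as a closed invariant subset of a $G$-AR $Z$ and taking for $X_{\alpha}$ an invariant $G$-ANR neighborhood basis of $X$ in $Z$, with the $p_{\alpha}$ being inclusions, so that $X=\bigcap_{\alpha}X_{\alpha}$. A finite-intersection/inverse-limit argument on the closed, nested fixed-point sets $X_{\alpha}^{G}$ will then produce a point of $\bigcap_{\alpha}X_{\alpha}^{G}=X^{G}$, yielding the required fixed point of $X$. This extraction step is where the technical work is concentrated and is the principal obstacle.
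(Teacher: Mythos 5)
Your setup coincides with the paper's: embed $X$ as a closed invariant subset of a $G$-AR $Y$, take invariant $G$-ANR neighborhoods of $X$ in $Y$ as the associated system, and observe that the $G$-shape morphism $\{*\}\to X$ forces every invariant neighborhood $U$ to contain a fixed point (a $G$-map from the one-point $G$-space into $U$ is exactly a point of $U^G$). The easy direction is also fine. The problem is the step you yourself flag as the principal obstacle: the tool you propose for it does not work. The sets $X_\alpha^G=X_\alpha\cap Y^G$ are open subsets of the fixed-point set of $Y$ (since the $X_\alpha$ are open neighborhoods), not closed subsets of $Y$; and even after passing to closures, a nested family of nonempty closed sets need not have nonempty intersection, because the proposition carries no compactness hypothesis on $X$ or $Y$. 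There is no finite-intersection or inverse-limit principle available here, so the extraction step is a genuine gap rather than a technicality.

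The paper closes the argument with a different and essentially one-line observation: the fixed-point set $Y^G$ of the ambient $G$-AR is closed. Hence, if $X$ had no fixed point, i.e. $X\cap Y^G=\emptyset$, then by normality one could separate the closed sets $X$ and $Y^G$, and by the fact quoted in Section 2 from Palais (every neighborhood of a closed invariant set contains an open invariant neighborhood) one would obtain an invariant neighborhood $U$ of $X$ with $U\cap Y^G=\emptyset$, i.e. an invariant neighborhood with no fixed point --- contradicting what the shape morphism gives you on every invariant neighborhood. Replacing your intersection argument by this separation argument completes the proof without any compactness assumption.
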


\begin{proof}
It is sufficient to prove that any
$G$-space $X\in wes(*)$ has a fixed point. Indeed, let
$F:\left\{ \ast \right\}\to X$ be a $G$-shape morphism. We
regard the $G$-space
$X$ as an invariant closed subset of some
$G$-AR  $Y$. Since $F:\left\{ \ast \right\}\to
X$ is a $G$-shape morphism, it follows that any invariant
neighborhood of the space
$X$ in $Y$ has a fixed point. Therefore, the $G$-space $X$ has a fixed
point as well, because the set of all fixed points of the $G$-space $Y$
is closed.
\end{proof}

We denote the weak equivariant shape comparability class of
a group $G$ with its natural action on itself
by $wes(G)$.
The following theorem characterizes the class $wes(G)$
in the case of a second countable compact group.

\begin{theorem}\label{th-1}
Let $G$ be a second countable compact group. Then the $G$-space
$X$ belongs to the class $wes(G)$ if and only if $X=X|G\times G$.
\end{theorem}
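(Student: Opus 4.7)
For the easier direction I will directly produce the two required shape morphisms as honest $G$-maps. Suppose $X$ is $G$-homeomorphic to $(X|G)\times G$, where $G$ acts by left translation on the second factor and trivially on the first. The projection onto $G$ is a $G$-map, and, for any chosen basepoint $y_0\in X|G$, the map $g\mapsto (y_0,g)$ from $G$ into $X$ is a $G$-map. Each induces the required $G$-shape morphism, so $X\in wes(G)$.

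The nontrivial direction splits naturally into two parts. The first part is to deduce that the $G$-action on $X$ must be free. For any closed subgroup $H\leq G$ with $H\neq\{e\}$, the fixed set of left translation is empty, $G[H]=\{g\in G:hg=g\ \text{for all}\ h\in H\}=\emptyset$. Since the fixed-point construction $X\mapsto X[H]$ is functorial on the equivariant shape category (the same functoriality underlying the movability results for $X[H]$ cited from \cite{gev1}), any $G$-shape morphism $F\colon X\to G$ induces a shape morphism $X[H]\to G[H]=\emptyset$. Because a nonempty space admits no morphism into the empty space, $X[H]=\emptyset$ for every nontrivial $H$; that is, every point stabilizer in $X$ is trivial.

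The second part is to promote $F$ to an actual continuous $G$-equivariant map $f\colon X\to G$ and then use it to trivialize the orbit map. Assuming $G$ is a $G$-ANR, condition~(1) in the definition of the expansion $\{X_\alpha\}$ of $X$ represents $F$ by a $G$-map $h_\alpha\colon X_\alpha\to G$ for some $\alpha$, and one takes $f:=h_\alpha\circ p_\alpha$. Freeness then makes $\Phi\colon X\to (X|G)\times G$, $\Phi(x)=(\pi(x),f(x))$, a $G$-equivariant continuous bijection whose restriction to each orbit $\pi^{-1}(y)\cong G$ is a $G$-homeomorphism onto $\{y\}\times G$. Compactness of $G$, together with the local slice structure for the free action of a compact group on $X$, then upgrades $\Phi$ to a $G$-homeomorphism, yielding $X=(X|G)\times G$.

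The main technical hurdle I anticipate is justifying that $G$ itself, under left translation, is a $G$-ANR. For compact Lie $G$ this is classical. In the general second countable compact case, the natural approach is to use the projective decomposition $G=\lim G_n$ with $G_n$ compact Lie, to view the inverse system $\{G_n\}$ (with $G$ acting via the projections $G\to G_n$) as an approximation of $G$, extract a coherent family of $G$-maps $X\to G_n$ from the shape morphism, and assemble them pointwise into a continuous $G$-map $X\to G$; ensuring that coherence holds on the nose rather than merely up to $G$-homotopy is the delicate step.
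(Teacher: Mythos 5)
Your proof has the right overall skeleton, but it stops exactly where the paper does all of its work. The paper deduces Theorem \ref{th-1} in two lines from Theorem \ref{th-2} (applied with $H=\{e\}$), and the entire content of that theorem is the step you defer: upgrading the $G$-shape morphism $F\colon X\to G$ to a genuine $G$-equivariant map $f\colon X\to G$. As you yourself observe, $G$ need not be a $G$-ANR --- for non-Lie $G$ it is not, which is the whole point of the paper --- so your sentence beginning ``Assuming $G$ is a $G$-ANR'' rests on a hypothesis you are not entitled to, and condition (1) of the expansion cannot be applied directly. Your proposed remedy (write $G$ as an inverse limit of compact Lie quotients $G|K_i$, represent $S_G(p_i)\circ F$ by $G$-maps $f_i\colon X\to G|K_i$, and assemble them) is precisely what the paper does, but you flag the coherence problem (``ensuring that coherence holds on the nose rather than merely up to $G$-homotopy is the delicate step'') without resolving it. The paper resolves it by observing that the bonding epimorphisms $p_{i,i+1}\colon G|K_{i+1}\to G|K_i$ are $G$-fibrations, so one can inductively replace each $f_{i+1}$ by a $G$-homotopic map $h_{i+1}$ satisfying $h_i=p_{i,i+1}\circ h_{i+1}$ on the nose, then pass to the inverse limit $h$ of the $h_i$ and invoke continuity of the shape functor to verify $S_G(h)=F$. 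Without this lifting argument your proof is incomplete at its central step.

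Two smaller remarks. First, once a genuine $G$-map $f\colon X\to G$ is in hand, freeness of the action is automatic ($hx=x$ gives $hf(x)=f(x)$, hence $h=e$), so your entire first part --- the fixed-point functoriality argument --- is redundant; moreover the trivialization then needs neither compactness nor slices: $x\mapsto\left(f(x)^{-1}x,\,f(x)\right)$ is a $G$-homeomorphism of $X$ onto $f^{-1}(e)\times G$ with inverse $(a,g)\mapsto ga$, which is exactly Bredon's Proposition 3.2 as cited in the paper. Second, on the easy direction you are in fact more careful than the paper: the paper exhibits only the projection $A\times G\to G$, whereas membership in $wes(G)$ also requires a morphism from $G$ to $X$, which your section $g\mapsto(y_0,g)$ supplies for nonempty $X$.
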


The proof of this theorem is based on the following theorem of
independent interest.

\begin{theorem}\label{th-2}
Suppose that $G$ is a second countable compact group, $H\subset G$
is a closed normal
subgroup of $G$, and $X$ is any $G$-space.
If there exists a $G$-shape morphism $F:X\to G|H$, then $X=G\times_HA$
and $F$ is generated by the $G$-equivariant
map $h:G\times_HA\to G|H$  given by $h([g,a])=gH$, where
$A$ is some $H$-space, $G\times_HA$ is the twisted product,
and $[g,a]$ is the $H$-orbit of the point $(g,a)$.
\end{theorem}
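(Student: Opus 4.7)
The plan is to show that the $G$-shape morphism $F$ is realised by a bona fide $G$-equivariant map $f:X\to G|H$, and then to extract the twisted product structure from the fibre $A:=f^{-1}(eH)$.

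The first step, which I expect to be the principal obstacle, is this realisation. Because $G$ is second countable and compact, Peter--Weyl expresses $G$ as an inverse limit of compact Lie quotients $G|K_n$, where $\{K_n\}$ is a decreasing sequence of closed normal subgroups with $\bigcap_n K_n=\{e\}$. Consequently $G|H\cong \varprojlim_n G|(HK_n)$, and each $G|(HK_n)$, being a compact Lie coset space, is a $G$-ANR. This inverse system is associated with $G|H$ in the equivariant shape sense, so the shape morphism $F$ is encoded by a compatible family of $G$-homotopy classes of $G$-maps $X\to G|(HK_n)$. Embedding $X$ as a closed invariant subset of some $G$-AR $Y$ and extending each representative to an invariant neighbourhood, then assembling them coherently as $n$ grows, one should obtain a single $G$-equivariant map $f:X\to G|H$ that generates $F$. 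It is here that the compactness and second countability of $G$ are essentially used.

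With $f$ in hand, set $A:=f^{-1}(eH)\subseteq X$. Since $f$ is equivariant and $h\cdot eH=eH$ for every $h\in H$, the subset $A$ is closed and $H$-invariant, hence an $H$-space in its own right. The evaluation map $\Phi:G\times A\to X$, $(g,a)\mapsto ga$, is constant on orbits of the twisted $H$-action $h\cdot(g,a)=(gh^{-1},ha)$, because $(gh^{-1})(ha)=ga$; therefore it descends to a continuous $G$-equivariant map $\bar\Phi:G\times_H A\to X$.

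It remains to verify that $\bar\Phi$ is a $G$-homeomorphism and that $f\circ\bar\Phi$ coincides with the canonical map $h$ of the statement. For surjectivity, given $x\in X$, write $f(x)=gH$; then $g^{-1}x\in A$ and $x=\bar\Phi([g,g^{-1}x])$. For injectivity, if $g_1 a_1=g_2 a_2$ with $a_1,a_2\in A$, then applying $f$ yields $g_1 H=g_2 H$, so $g_2^{-1}g_1\in H$; setting $h_0:=g_2^{-1}g_1$ we have $a_2=h_0 a_1$ and hence $[g_1,a_1]=[g_2,a_2]$ in $G\times_H A$. Compactness of $G$ makes the quotient $G\times A\to G\times_H A$ a closed map, which together with continuity of $\bar\Phi$ forces the inverse to be continuous. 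Finally $(f\circ\bar\Phi)([g,a])=f(ga)=g\cdot f(a)=gH$, which is exactly the canonical map $h$; thus $F$ is generated by $h$, as claimed.
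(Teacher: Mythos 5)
Your overall strategy coincides with the paper's: decompose $G|H$ as an inverse limit of compact Lie coset spaces (each a $G$-ANR), realise $F$ levelwise by genuine $G$-maps, pass to the limit to obtain a single $G$-map $h:X\to G|H$ generating $F$, and then read off the twisted-product structure from $A=h^{-1}(eH)$. The second half of your argument is essentially an elementary proof of the fact the paper simply cites from Bredon (Proposition 3.2), and the algebraic verifications (surjectivity, injectivity, $f\circ\bar\Phi=h$) are correct; only the continuity of $\bar\Phi^{-1}$ deserves more care when $X$ is not compact, since closedness of the quotient $G\times A\to G\times_H A$ does not by itself make $\bar\Phi$ a closed map.

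The genuine gap is exactly where you anticipated it, and the mechanism you sketch does not close it. Because each $G|(HK_n)$ is a $G$-ANR, the shape morphism $F$ does yield $G$-maps $f_n:X\to G|(HK_n)$ with $S_G(p_n)\circ F=S_G(f_n)$, and these satisfy $f_n\simeq_G p_{n,n+1}\circ f_{n+1}$ --- but only up to $G$-homotopy. A homotopy-commuting tower of maps does not induce a map into the inverse limit; it must first be rectified into a strictly commuting tower. Embedding $X$ in a $G$-AR and extending representatives over invariant neighbourhoods does nothing toward this rectification; ``assembling them coherently as $n$ grows'' is precisely the argument that is missing. The paper's fix is to observe that the bonding maps $p_{n,n+1}$ are $G$-fibrations, so the equivariant homotopy lifting property lets one inductively replace each $f_{n+1}$ by a $G$-homotopic map $h_{n+1}$ with $h_n=p_{n,n+1}\circ h_{n+1}$ holding on the nose, after which $h=\lim\limits_{\longleftarrow}h_n$ is well defined. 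You also omit the verification that the resulting $h$ actually generates $F$, i.e.\ that $S_G(h)=F$; the paper obtains this from the continuity of the shape functor applied to the limit presentation of $G|H$. Both steps need to be supplied for your proof to be complete.
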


\begin{proof}
Since the group $G$ is compact and
second countable, it follows that so is the group $G\left| H \right.$.
By a well-known theorem of Pontryagin \cite[Theorem 68]{pontr},
there exist closed normal divisors $K_i$ ($i=1,2,\dots$)
of the group $G\left| H \right.$
such that $K_{i+1} \subset K_i $,  $(G\left| {H)\left| {K_i } \right.}
\right.$ is a Lie group for any $i=1, 2, \dots$, and
\begin{equation}
G|H= \lim_{\longleftarrow}\left\{(G|H)|K_i; \ p_{i, i+1}\right\},
\end{equation}
where the $p_{i,i+1} :(G|H)|K_{i+1} \to (G|H)|K_i$ are the natural
epimorphisms generated by the inclusions $K_{i+1} \subset K_i $.

Note that, for any $i=1, 2, \dots$, the group $(G|H)|K_i$
is isomorphic (topologically and algebraically) to the group $G|\tilde {K}_i$,
where $\tilde {K}_i =p^{-1}\left( {K_i } \right)$ and
$p:G\to G\left| H \right.$ is the natural epimorphism.
The groups $\tilde {K}_i$ with $i=1, 2, \dots$, being
continuous preimages of the closed
normal subgroups $K_i \subset G\left| H \right.$, are closed and normal.
Thus, we have
\begin{equation}
G | H=\lim_{\longleftarrow}\left\{ G | \tilde {K}_i ;p_{i,i+1}\right\},
\end{equation}
where all maps $p_{i,i+1}$ are $G$-equivariant, provided that the space
$G | {\tilde {K}_i }$ is endowed with the natural action
of the group $G$.

Now, suppose given a $G$-shape morphism $F:X\to G|H$. Let us prove that $F$
is generated by some equivariant map $h:X\to G|H$.

Consider the $G$-shape maps
$S_G \left( {p_i } \right)\circ F:X\to G|\tilde {K}_i$,
where $S_G$ is the $G$-shape functor and
the $p_i:G\to G|\tilde{K}_i$ are the projections. Since
all $G|\tilde{K}_i$ are $G$-ANRs, it follows that
there exist $G$-equivariant maps $f_i :X\to G|\tilde{K}_i$ for which
\begin{equation}
S_G \left( {p_i } \right)\circ F=S_G \left( {f_i } \right).
\end{equation}
On the other hand, we have
\begin{multline*}
S_G \left( {f_i } \right)=S_G \left( {p_i } \right)\circ F=S_G \left(
{p_{i,i+1} } \right)\circ S_G \left( {p_{i+1} } \right)\circ F \\
=S_G \left({p_{i,i+1} } \right)\circ S_G \left( {f_{i+1} } \right)=S_G \left(
{p_{i,i+1} \circ f_{i+1} } \right).
\end{multline*}
Thus, the $G$-maps $f_i $ and $p_{i,i+1} \circ f_{i+1} $ to the
$G$-ANR $G|\tilde {K}_i$ generate the same $G$-shape morphism.
Therefore, they are $G$-homotopic, i.e.,
\begin{equation}
f_i \simeq_G p_{i,i+1} \circ f_{i+1} .
\end{equation}
Let $h_1 =f_1 $. All maps $p_{i,i+1}$ are $G$-fibrations.
Hence there exists a $G$-map $h_2 :X\to G|\tilde{K}_2$
such that $h_2 \simeq_G f_2$ and $h_1 =p_{1,2} \circ h_2$.
Continuing this construction by induction,
we obtain $G$-maps $h_i :X\to G|\tilde {K}_i$
such that
\begin{equation}
h_i \simeq_G f_i\qquad\text{and} \qquad h_i =p_{i,i+1} \circ h_{i+1}.
\end{equation}

We set $h=\lim\limits_{\longleftarrow}h_i $.
Obviously, $h$ is a $G$-map from $X$ to $G\left| H
\right.$. Let us prove that the map $h$ has the required property
\begin{equation}
\label{eq5}
S_G \left( h \right)=F.
\end{equation}
Indeed, the continuity of the shape functor \cite{holsz} implies
\[
S_G \left( h \right)=S_G \left( {\,h_i } \right)=\,S_G \left( {h_i }
\right)=\,S_G \left( {f_i } \right)=\,S_G \left( {p_i } \right)\circ
F=1\circ F=F.
\]

Let $A=h^{-1}\left( {eH} \right)$. Then $A$ is an $H$-invariant
subset of the $G$-space $X$,
$X=G\times_HA$, and $h:G\times_HA\to G|H$ is defined by
$h([g,a])=gH$ \cite[Proposition 3.2]{br72}.

This completes the proof of the theorem.
\end{proof}

\begin{proof}[Proof of Theorem \ref{th-1}] First, note that if
$X=A\times G$, where $A$ is a trivial $G$-space,
then the  map $f:A\times G\to G$
defined by $f(a,g)=g$ is equivariant. Therefore, $X=A\times G\in wes(G)$.

Now, suppose that $X$ is a $G$-space in the class $wes(G)$.
Consider a $G$-shape morphism $F:X\to G$.
Taking the trivial group $\{e\}$ for the closed normal
subgroup $H$  in Theorem \ref{th-2}, we obtain $X=X|G\times G$.
\end{proof}

\begin{corollary}
Let $G$ be a second countable compact group,
and let  $H$ and $K$ be its closed normal subgroups.
Then  $wes(G|H)=wes(G|K)$ if and only if the subgroup $H$
is conjugate to the subgroup $K$.
\end{corollary}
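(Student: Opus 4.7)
My plan is as follows. The ``if'' direction is immediate: since both $H$ and $K$ are normal in $G$, they are conjugate only if they coincide, and then $G|H=G|K$ as $G$-spaces, so trivially $wes(G|H)=wes(G|K)$.

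For the converse, suppose $wes(G|H)=wes(G|K)$. Then $G|H\in wes(G|K)$, so by Definition~\ref{def-1} there exists a $G$-shape morphism $F\colon G|H\to G|K$. Because $K$ is a closed normal subgroup of the second countable compact group $G$, Theorem~\ref{th-2} applies, with $K$ playing the role of the normal subgroup and $G|H$ playing the role of $X$. It yields a presentation $G|H=G\times_K A$ for some $K$-space $A$, and the morphism $F$ is generated by $[g,a]\mapsto gK$.

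Next I would extract the group-theoretic content of $A$. The key observation is that the $G$-orbit space of the twisted product $G\times_K A$ is naturally identified with the $K$-orbit space $A|K$; since $G$ acts transitively on $G|H$, this forces $K$ to act transitively on $A$, so $A$ is $K$-equivariantly homeomorphic to a homogeneous space $K|L$ for some closed subgroup $L\leqslant K$. The standard identification $G\times_K (K|L)\cong G|L$ as $G$-spaces then gives $G|H\cong G|L$, and since isomorphic transitive $G$-spaces have conjugate isotropy subgroups, $H$ must be conjugate to $L$ in $G$. Normality of $H$ forces $H=L$, so $H\subseteq K$. Running exactly the same argument starting from a $G$-shape morphism $G|K\to G|H$ (also provided by the assumption) yields $K\subseteq H$, and we conclude $H=K$.

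The only non-routine step beyond invoking Theorem~\ref{th-2} is the passage from ``$G\times_K A$ is a transitive $G$-space'' to ``$A=K|L$'', together with the identification $G\times_K (K|L)\cong G|L$. Both are classical properties of twisted products (cf.\ Bredon~\cite{br72}), but they need to be applied carefully to translate the shape-theoretic conclusion of Theorem~\ref{th-2} into the desired conclusion about normal subgroups. No further shape theory is required once this translation is in place.
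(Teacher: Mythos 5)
Your proof is correct, and it follows the paper's skeleton up to a point: both arguments convert the two $G$-shape morphisms into genuine equivariant maps via Theorem~\ref{th-2}. Where you diverge is in the final, group-theoretic step. The paper stops at the existence of equivariant maps $G|H\to G|K$ and $G|K\to G|H$ and simply cites Bredon \cite[Corollary 4.4]{br72}, which says that such maps exist in both directions if and only if $H$ and $K$ are conjugate. You instead exploit the full strength of the conclusion of Theorem~\ref{th-2} --- the twisted-product decomposition $G|H=G\times_K A$ --- and unwind it by hand: transitivity of $G$ on $G|H$ forces the orbit space $A|K$ to be a point, hence $A\cong K|L$ (using compactness of $K$ and the Hausdorff property of $A\subseteq G|H$ to upgrade the continuous bijection $K/L\to A$ to a homeomorphism), hence $G|H\cong G|L$ and $H=L\subseteq K$ by normality; symmetry then gives $H=K$. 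This is a legitimate and essentially self-contained derivation of the special case of Bredon's corollary that is actually needed, at the cost of being longer; the paper's citation is more economical but hides the subconjugacy argument inside a black box. Your observation that normality collapses ``conjugate'' to ``equal'' (used in both directions of your proof) is also correct and makes the ``if'' direction genuinely trivial, as it should be.
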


\begin{proof}
By Definition \ref{def-1}, the equality $wes(G|H)=wes(G|K)$
means the existence of $G$-shape morphisms both from $G|H$ to $G|K$
and from $G|K$ to $G|H$. According to Theorem \ref{th-2},
there exist equivariant maps
from $G|H$ to $G|K$ and from $G|K$ to $G|H$,
which is possible
if and only if $H$ is conjugate to $K$ \cite[Corollary 4.4]{br72}.
\end{proof}

\section{Equivariantly Movable Groups}

\begin{definition}\label{def-2}
We say that an inverse $G$-ANR
sequence $\left\{X_K, p_{k,l}\right\}$ is
canonical if, for any $k\in N$, there exists an
equivariant map $r^{k,k+1}:X_k \to X_{k+1}$ such that
\begin{equation}\label{eq1}
p_{j,k+1} \circ r^{k,k+1}\simeq_Gp_{j,k},
\end{equation}
where $1\leqslant j<k$.
\end{definition}

The following proposition is easy to prove.

\begin{proposition}\label{th-3}
Any canonical $G$-ANR sequence is $G$-movable.
\end{proposition}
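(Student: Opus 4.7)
The plan is to verify Definition 2 directly for the sequence $\{X_k,p_{k,l}\}$. Given $k\in\mathbb{N}$, I would set $k'=k+1$ and construct, for every $k''\geqslant k$, an equivariant map $r^{k+1,k''}:X_{k+1}\to X_{k''}$ satisfying
\[
p_{k,k''}\circ r^{k+1,k''}\simeq_G p_{k,k+1}.
\]
The three cases $k''<k+1$, $k''=k+1$, $k''>k+1$ each receive a separate construction, but the verification of the above identity is unified by a short induction on $k''$.

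First, I define the maps. Put $r^{k+1,k}=p_{k,k+1}$ (viewed as a $G$-map $X_{k+1}\to X_k$), put $r^{k+1,k+1}=\mathrm{id}_{X_{k+1}}$, and for $k''\geqslant k+2$ put
\[
r^{k+1,k''}=r^{k''-1,k''}\circ r^{k''-2,k''-1}\circ\cdots\circ r^{k+1,k+2},
\]
where each factor $r^{i,i+1}$ is a canonical equivariant map supplied by Definition \ref{def-2}.

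Next, I would verify the identity. The cases $k''=k$ and $k''=k+1$ follow at once from $p_{k,k}=\mathrm{id}_{X_k}$ and the construction. For $k''\geqslant k+2$ I would induct on $k''$. Writing $r^{k+1,k''}=r^{k''-1,k''}\circ r^{k+1,k''-1}$ and peeling off the outermost factor, the canonical relation (\ref{eq1}) applied to $r^{k''-1,k''}$ with $j=k$ gives $p_{k,k''}\circ r^{k''-1,k''}\simeq_G p_{k,k''-1}$; the hypothesis $j<k''-1$ holds because $k''\geqslant k+2$. Hence
\[
p_{k,k''}\circ r^{k+1,k''}\simeq_G p_{k,k''-1}\circ r^{k+1,k''-1},
\]
and the inductive hypothesis (with base $k''=k+2$, which is a single application of (\ref{eq1})) finishes the argument.

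The only real subtlety is the strict inequality $j<k$ appearing in the canonical relation (\ref{eq1}): one cannot feed $j=k$ into the relation governing the map $r^{k,k+1}$, so the naive choice $k'=k$ is not available. Shifting to $k'=k+1$ precisely ensures that the fixed lower index $j=k$ remains strictly below the upper index of every canonical map invoked in the induction, making each step legitimate.
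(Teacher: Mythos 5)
Your proof is correct. The paper gives no argument at all for this proposition (it is dismissed as "easy to prove"), and what you wrote is exactly the intended verification: taking $\alpha'=k+1$, building $r^{k+1,k''}$ by composing the canonical maps, and noting that the strict inequality $j<k$ in (\ref{eq1}) is precisely what forces the shift from $k'=k$ to $k'=k+1$ — that last observation is the only genuine content of the proposition, and you identified it.
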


The following assertion is also valid.

\begin{proposition}\label{th-4}
Any $G$-movable inverse $G$-ANR sequence contains a canonical
subsequence.
\end{proposition}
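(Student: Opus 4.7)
The plan is to build the subsequence inductively, iterating a chosen movability index. For each $k\in N$, the $G$-movability of $\{X_k,p_{k,l}\}$ supplies an index $\nu(k)\geqslant k$ with the property that, for every $\beta\geqslant k$, there is an equivariant map $r\colon X_{\nu(k)}\to X_\beta$ satisfying $p_{k,\beta}\circ r\simeq_G p_{k,\nu(k)}$. A short direct verification shows that one may always arrange $\nu(k)>k$: indeed, if $\nu(k)=k$, then $X_k$ is a homotopy $G$-retract of every $X_\beta$, and one checks that $k+1$ then also qualifies as a movability index for $k$.

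Set $k_0=1$ and $k_n=\nu(k_{n-1})$ for $n\geqslant 1$; this gives a strictly increasing sequence $k_1<k_2<\cdots$. For each $n\geqslant 1$, apply the movability at $\alpha=k_{n-1}$ with $\beta=k_{n+1}$; this yields an equivariant map
\[
r^{n,n+1}\colon X_{\nu(k_{n-1})}=X_{k_n}\longrightarrow X_{k_{n+1}}
\]
such that
\[
p_{k_{n-1},k_{n+1}}\circ r^{n,n+1}\simeq_G p_{k_{n-1},k_n}.
\]
For $1\leqslant j<n-1$, the remaining canonical conditions of Definition~\ref{def-2} (after relabelling the subsequence $\{X_{k_n}\}$) follow automatically by pre-composing with the bonding map $p_{k_j,k_{n-1}}$:
\[
p_{k_j,k_{n+1}}\circ r^{n,n+1}\simeq_G p_{k_j,k_{n-1}}\circ p_{k_{n-1},k_{n+1}}\circ r^{n,n+1}\simeq_G p_{k_j,k_{n-1}}\circ p_{k_{n-1},k_n}\simeq_G p_{k_j,k_n}.
\]
The $n=1$ case in the relabelled subsequence is vacuous, since there is no $j$ with $1\leqslant j<1$.

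The main obstacle I anticipate is resisting the natural urge to try to pick $k_{n+1}$ as a single movability index serving all of $k_1,\ldots,k_{n-1}$ simultaneously; that approach would force one to merge several equivariant maps into one, which is awkward. The conceptual step that makes the clean single-step induction work is the observation that only the commutation with the immediately preceding bonding map $p_{k_{n-1},k_{n+1}}$ has to be built by hand; commutation with $p_{k_j,k_{n+1}}$ for smaller $j$ is then forced by the composition law $p_{k_j,k_l}\circ p_{k_l,k_m}\simeq_G p_{k_j,k_m}$ in the inverse system. A minor technical annoyance is the verification that $\nu$ can be chosen strictly increasing, which is handled above.
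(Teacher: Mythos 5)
Your proof is correct and takes essentially the same approach as the paper's: choose each $k_{n+1}$ as a (strictly larger) movability witness for $k_n$, obtain the equivariant map $X_{k_n}\to X_{k_{n+1}}$ from the movability condition at $k_{n-1}$, and deduce the conditions for smaller indices by precomposing with bonding maps. You simply spell out the verification that the paper dismisses as ``easy to check,'' together with the minor point that the witness can be taken strictly increasing.
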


\begin{proof}
Let $\left\{X_k, p_{k,l}\right\}$ be a $G$-movable $G$-ANR sequence.
For $k=1$,
there exists a number $k_1 >1$ which witnesses the
sequence $\left\{X_k, p_{k,l}\right\}$
being $G$-movable. By induction, given a number $k=k_i$, we choose
a number $k_{i+1} > k_i$ such that, for any other number
$l>k_{i+1}$, there exists
an equivariant map $r^{k_{i+1},l} : X_{k_{i+1}} \to X_l$ for which
\begin{equation}\label{eq3}
p_{k_i,l} \circ r^{k_{i+1},l} \simeq_G p_{k_i,k_{i+1}}.
\end{equation}
It is easy to check that
$\left\{X_{k_i}, p_{k_i,k_{i+1}}, i\geqslant 1\right\}$
is the required canonical subsequence.
\end{proof}

The following theorem follows directly from Propositions~\ref{th-3}
and \ref{th-4}.

\begin{theorem}\label{th-5}
A compact metrizable $G$-space $X$ is $G$-movable if and only
if there exists
a canonical inverse $G$-ANR sequence
$G$-associated with $X$.
\end{theorem}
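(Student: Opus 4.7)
The plan is to derive both implications as direct combinations of Propositions~\ref{th-3} and \ref{th-4}, once we exploit the fact that a compact metrizable $G$-space admits a $G$-ANR \emph{sequence} (not merely a general system) as an associated expansion.

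For the ``if'' direction, suppose we are given a canonical inverse $G$-ANR sequence $\{X_k, p_{k,l}\}$ that is $G$-associated with $X$. By Proposition~\ref{th-3} this sequence is $G$-movable. Since $G$-movability is, by definition, the existence of \emph{some} $G$-movable associated $G$-ANR expansion of $X$, we conclude immediately that $X$ is $G$-movable. This direction is essentially a one-line reading of the definitions.

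For the ``only if'' direction, assume $X$ is a compact metrizable $G$-movable space. I would first invoke the general fact (for a second countable, or more generally compact metrizable, $G$-space) that one can choose an associated inverse system $\underline{X}=\{X_k, p_{k,l}, \mathbb{N}\}$ consisting of compact $G$-ANRs indexed by the natural numbers; the existence of such a sequential expansion in the equivariant setting (cf.\ \cite{am87}) is the technical point that lets us pass from a general directed set to $\mathbb{N}$. Because $G$-movability is an invariant of the $G$-shape (equivalently, of the pro-homotopy class of the associated system), this sequence $\underline{X}$ is itself $G$-movable. Applying Proposition~\ref{th-4} yields a canonical subsequence $\{X_{k_i}, p_{k_i,k_{i+1}}\}$. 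A cofinal subsequence of an associated inverse system is still associated with $X$ (the factorization and uniqueness conditions in the definition of an associated system pass to cofinal subsequences), so this canonical subsequence is a canonical $G$-ANR sequence $G$-associated with $X$, as required.

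The only real obstacle is the sequential expansion step in the ``only if'' direction: everything else is a formal consequence of Propositions~\ref{th-3} and \ref{th-4} together with the invariance of the relevant notions under passage to cofinal subsequences. Since $X$ is compact metrizable and the equivariant expansion theorem of Antonyan--Mardesic produces an $\mathbb{N}$-indexed $G$-ANR system in this setting, this obstacle is resolved by citation, and the theorem follows.
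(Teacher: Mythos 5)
Your proof is correct and takes essentially the same route as the paper, which gives no written proof at all but simply asserts that the theorem ``follows directly from Propositions~\ref{th-3} and \ref{th-4}.'' Your argument is precisely the careful unwinding of that claim: Proposition~\ref{th-3} for the ``if'' direction, and for the ``only if'' direction the sequential $G$-ANR expansion of a compact metrizable $G$-space together with Proposition~\ref{th-4} and the fact that a cofinal subsequence of an associated system remains associated.
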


\begin{lemma}\label{lemma-2}
Let $X$ be a compact metrizable $G$-movable space. Then
there exists an
inverse $G$-ANR sequence
$\left\{X_k, p_{k,l}\right\}$  $G$-associated with $X$
such that $X_k \in wes(X)$ for any
$k\in N$.
\end{lemma}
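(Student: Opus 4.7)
The plan is to start from a canonical inverse $G$-ANR sequence and, using only its canonical structure, write down an explicit thread that represents a $G$-shape morphism $X_k\to X$ for each index $k$.

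First, I would invoke Propositions~\ref{th-3} and \ref{th-4} to replace the given $G$-associated $G$-ANR sequence by a canonical subsequence, still denoted $\{X_k,p_{k,l}\}$ (passage to a cofinal subsequence preserves the property of being $G$-associated with $X$). Thus for every $k\in\mathbb N$ there is a $G$-equivariant map $r^{k,k+1}\colon X_k\to X_{k+1}$ satisfying $p_{j,k+1}\circ r^{k,k+1}\simeq_G p_{j,k}$ for $1\leqslant j<k$. For $l>k$ I set $r^{k,l}:=r^{l-1,l}\circ\cdots\circ r^{k,k+1}\colon X_k\to X_l$.

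Fix $k$. The $G$-shape morphism $X\to X_k$ is given by $S_G(p_k)$, so it remains to construct a $G$-shape morphism $F_k\colon X_k\to X$ in order to place $X_k$ in $wes(X)$. My candidate is the family of $G$-homotopy classes
\[
[f_l]:=\bigl[\,p_{l,m}\circ r^{k,m}\,\bigr]\in[X_k,X_l]_G,\qquad m>\max(l,k),
\]
which I would argue is well defined (independent of $m$) and forms a thread in the inverse system $\{X_k,p_{k,l}\}$. Since this system is $G$-associated with $X$, such a thread represents a $G$-shape morphism $F_k\colon X_k\to X$.

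The key calculation, and the only place where canonicality is used, is the following: for any $m>l$, the canonical identity at index $m$ with $j=l$ gives $p_{l,m+1}\circ r^{m,m+1}\simeq_G p_{l,m}$, and hence
\[
p_{l,m+1}\circ r^{k,m+1}=p_{l,m+1}\circ r^{m,m+1}\circ r^{k,m}\simeq_G p_{l,m}\circ r^{k,m}.
\]
Iterating shows $[f_l]$ is independent of $m$, and thread compatibility $p_{l,l'}\circ f_{l'}\simeq_G f_l$ for $l<l'$ is then immediate by picking a common $m>\max(l',k)$ and using $p_{l,l'}\circ p_{l',m}=p_{l,m}$.

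The main obstacle is the temptation to take $f_l:=r^{k,l}$; that family fails to be a thread, since reducing $p_{l,m}\circ r^{k,m}$ via canonicality stops at $p_{l,l+1}\circ r^{l,l+1}\circ r^{k,l}$, which equals $r^{k,l}$ only when $p_{l,l+1}\circ r^{l,l+1}\simeq_G 1_{X_l}$---a condition neither canonicality nor $G$-movability supplies. Inserting the extra projection $p_{l,m}$ in front of $r^{k,m}$ absorbs precisely this defect and produces a genuine thread with no additional hypothesis; everything else in the argument is routine.
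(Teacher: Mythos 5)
Your proposal is correct and follows essentially the same route as the paper: pass to a canonical sequence via Theorem~\ref{th-5}, then build a $G$-shape morphism $X_k\to X$ from the thread $f_l=p_{l,m}\circ r^{k,m}$, which for $m=l+1$ is exactly the paper's choice $f_i=p_{i,i+1}\circ r^{k,i+1}$, verified by the same canonicality computation. Your extra remarks (independence of $m$, and why the naive choice $f_l=r^{k,l}$ fails to be a thread) are sound but do not change the argument.
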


\begin{proof}
Let $X$ be a compact metrizable $G$-movable space.
By Theorem~\ref{th-5}, there exists a  canonical
$G$-ANR sequence
$\left\{X_k, p_{k, l}\right\}$ $G$-associated with $X$.
Let us prove that this sequence is as required, i.e.,
$X_k \in wes(X)$ for any $k\in N$.
Since the maps $p_k:X\to X_k$ are equivariant, it suffices to prove
the existence of a $G$-shape morphism from $X_k$ to $X$.
Consider the morphism
$F=\left\{f_i, \varphi\right\} :
X_k \to \left\{X, p_{i,i'}, i\geqslant k\right\}$ defined by
\begin{equation}
\varphi(i)=k, \quad f_i =p_{i,i+1} \circ r^{k,i+1},
\end{equation}
where $r^{k,i+1}=r^{i,i+1}\circ \dots \circ r^{k,k+1}$
and the $r^{j,j+1}$ are the equivariant maps mentioned in
Definition~\ref{def-2}. The morphism
$F=\left\{f_i, \varphi\right\}$ is a map
of $G$-ANR sequences. Indeed,
\begin{multline*}
p_{i,i+1} \circ f_{i+1} =p_{i,i+1} \circ p_{i+1,i+2} \circ
r^{k,i+2}=p_{i,i+2} \circ r^{i+1,i+2}\circ r^{k,i+1} \\
\simeq_G p_{i,i+1} \circ r^{k,i+1}=f_i.
\end{multline*}
\end{proof}

Lemma~\ref{lemma-2} implies directly the following assertion.

\begin{theorem}
Any compact metrizable $G$-movable space is weakly equivariantly shape
comparable with some
$G$-ANR.
\end{theorem}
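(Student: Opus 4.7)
The plan is essentially to invoke Lemma~\ref{lemma-2} and then pick one term of the sequence it produces. Given a compact metrizable $G$-movable space $X$, Lemma~\ref{lemma-2} supplies an inverse $G$-ANR sequence $\{X_k, p_{k,l}\}$ that is $G$-associated with $X$ and has the crucial property that $X_k \in wes(X)$ for every $k \in \mathbb{N}$.

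I would fix any particular index, say $k=1$, and set $Y = X_1$. Since $\{X_k, p_{k,l}\}$ is a $G$-ANR sequence, $Y$ is by definition a $G$-ANR. The condition $Y \in wes(X)$ means precisely that $X$ and $Y$ are weakly equivariantly shape comparable in the sense of Definition~\ref{def-1}, which is exactly the conclusion required.

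There is no substantive obstacle here, since all the real work has been absorbed into Lemma~\ref{lemma-2}. The only thing worth emphasizing in the writeup is the two directions of shape comparability: the $G$-shape morphism from $X$ to $Y$ is simply $S_G(p_1)$ coming from the associated sequence, while the $G$-shape morphism from $Y$ to $X$ is exactly the morphism $F = \{f_i, \varphi\}$ constructed in the proof of Lemma~\ref{lemma-2} using the canonical maps $r^{k,k+1}$. Thus the statement is a one-line corollary of Lemma~\ref{lemma-2}, and the proof merely needs to spell out the unpacking of the definitions.
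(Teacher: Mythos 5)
Your proposal is correct and follows exactly the paper's route: the paper states that the theorem follows directly from Lemma~\ref{lemma-2}, and your argument simply makes explicit the selection of one term $X_1$ of the associated $G$-ANR sequence lying in $wes(X)$. Nothing further is needed.
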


However, as shown below (see Corollary~\ref{cor-1}),
there exist compact metrizable $G$-spaces
which are not weakly equivariantly shape comparable with any
$G$-ANR ($G$-movable space).

\begin{lemma}\label{lemma-1}
Suppose that $ass_G X=\left\{X_k, p_{k,k+1}\right\}$,
$ass_G Y=\left\{Y_l, q_{l,l+1}\right\}$,
$wes(X)=wes(Y)$, and $X_k \in K$
for any $k\in N$, where $K$ is a class
of weak equivariant shape comparability.
Then the $G$-ANR sequence $\left\{Y_l, q_{l,l+1}\right\}$
has a subsequence $\left\{Y_{l_i}, q_{l_i, l_{i+1}}\right\}$
such that $Y_{l_i} \in K$ for any
$i=1,2,\dots$\,.
\end{lemma}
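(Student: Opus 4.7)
The plan is to extract the subsequence $\{l_k\}$ directly from the indexing data of a $G$-shape morphism $Y\to X$, whose existence is guaranteed by the hypothesis $wes(X)=wes(Y)$.

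First I would represent this shape morphism by a level pro-morphism $\{g_k,\psi\}\colon\{Y_l,q_{l,l+1}\}\to\{X_k,p_{k,k+1}\}$ in the pro-category of $H$-$\mathtt{Top}^G$, consisting of an index function $\psi\colon N\to N$ together with equivariant maps $g_k\colon Y_{\psi(k)}\to X_k$ commuting with the bonding maps up to $G$-homotopy. By the standard cofinality manipulation in this pro-category, namely post-composing each $g_k$ with a suitable bonding map of $\{Y_l\}$, I may assume that $\psi$ is strictly increasing. Set $l_k:=\psi(k)$; this is the candidate subsequence, and the lemma reduces to verifying $Y_{l_k}\in K=wes(X)$ for every $k$.

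To see this, for each $k$ I would produce $G$-shape morphisms in both directions between $Y_{l_k}$ and $X$. For the direction $Y_{l_k}\to X$: the equivariant map $g_k$ between the $G$-ANRs $Y_{l_k}$ and $X_k$ defines a $G$-shape morphism $S_G(g_k)\colon Y_{l_k}\to X_k$; since $X_k\in K=wes(X)$, there exists a $G$-shape morphism $X_k\to X$, and the composition provides the desired $G$-shape morphism $Y_{l_k}\to X$. For the direction $X\to Y_{l_k}$: the equivariant projection $q_{l_k}\colon Y\to Y_{l_k}$ coming from $ass_G Y$ induces a $G$-shape morphism $S_G(q_{l_k})\colon Y\to Y_{l_k}$, which composed with the $G$-shape morphism $X\to Y$ granted by $wes(X)=wes(Y)$ yields the required $X\to Y_{l_k}$. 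Hence $Y_{l_k}\in wes(X)=K$, as needed.

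The step I expect to be most delicate is the preliminary arrangement that the index function $\psi$ of the chosen level representative is strictly increasing (rather than merely unbounded), so that $\{l_k\}$ is an honest subsequence of $\{Y_l\}$. This is a standard but technical maneuver inside the pro-category of the equivariant homotopy category: any level representative of a given pro-morphism can be replaced by another with a strictly monotone index function by post-composing the component maps with suitable bonding maps, without changing the underlying shape morphism. Once this preparation is in place, the two-sided weak equivariant shape comparability of $Y_{l_k}$ with $X$ follows cleanly from the compositions described above.
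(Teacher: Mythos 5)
Your proposal is correct and follows essentially the same route as the paper: both extract the subsequence from the index function of a level representative of the $G$-shape morphism $Y\to X$ and use its equivariant component maps $Y_{l_k}\to X_k$ together with $X_k\in K$ for one direction of comparability. The only (harmless) deviations are that you obtain the reverse morphism $X\to Y_{l_k}$ by composing $\Phi\colon X\to Y$ with the projection $q_{l_k}\colon Y\to Y_{l_k}$, whereas the paper uses the equivariant component maps $g_{\varphi(k)}\colon X_{\psi(\varphi(k))}\to Y_{\varphi(k)}$ of $\Phi$, and that you explicitly address the monotonicity of the index function, a point the paper leaves implicit.
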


\begin{proof}
The condition $wes(X)=wes(Y)$ means the existence of
$G$-shape morphisms $F:Y\to X$ and $\Phi:X\to Y$.
Suppose that $F=\left\{f_k, \varphi\right\} : \left\{Y_l, q_{l,l+1}\right\}
\to \left\{X_k, p_{k,k+1}\right\}$ and
$\Phi=\left\{g_l, \psi\right\} : \left\{X_k, p_{k,k+1}\right\}\to
\left\{Y_l, q_{l,l+1}\right\}$.
Let us prove that $\left\{Y_{\varphi(k)}, q_{\varphi(k), \varphi(k+1)}\right\}$
is the required subsequence. For this purpose, we show that
the $Y_{\varphi(k)}$ and the $X_k$ are weakly equivariantly shape comparable
and belong to the class $K$ for any $k=1,2,\dots$\,.
Indeed, the maps $f_k : Y_{\varphi(k)} \to X_k$ are equivariant.
On the other hand, the $X_k$ and the $X_{\psi(\varphi(k))}$
belong to the class $K$, and
the maps $g_{\varphi(k)} : X_{\psi(\varphi(k))} \to Y_{\varphi(k)}$ are
equivariant. Therefore, $Y_{\varphi(k)} \in K$, as required.
\end{proof}

\begin{theorem}\label{th-6}
If a  weak equivariant shape comparability class $K$
contains a $G$-movable metrizable compact space, then,
for any compact metrizable $G$-space $X\in K$, there exists a
$G$-ANR sequence $\left\{X_k, p_{k,k+1}\right\}$
$G$-associated with $X$ such that $X_k \in K$ for any $k\in N$.
\end{theorem}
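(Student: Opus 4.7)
The plan is to deduce the theorem as a near-immediate consequence of Lemmas~\ref{lemma-2} and~\ref{lemma-1}, together with the standard fact that a cofinal subsystem of an inverse $G$-ANR sequence $G$-associated with a space is itself $G$-associated with that space.

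First I would exploit the hypothesis: $K$ contains some compact metrizable $G$-movable space, call it $Y$. Applying Lemma~\ref{lemma-2} to $Y$ produces an inverse $G$-ANR sequence $\{Y_l, q_{l,l+1}\}$ that is $G$-associated with $Y$ and satisfies $Y_l \in wes(Y) = K$ for every $l \in \mathbb{N}$. Now let $X \in K$ be an arbitrary compact metrizable $G$-space. The density of $H$-ANR$^G$ in $H$-$\mathtt{Top}^G$ cited in Section~\ref{sec1} provides some inverse $G$-ANR sequence $\{X'_k, p'_{k,k+1}\}$ that is $G$-associated with $X$. Because $X, Y \in K$ we have $wes(X) = wes(Y)$, and I can apply Lemma~\ref{lemma-1} with the roles of the two spaces swapped: the sequence for $Y$, all of whose terms already lie in $K$, plays the role of the ``$X_k$-sequence'' in the statement of Lemma~\ref{lemma-1}, while $\{X'_k, p'_{k,k+1}\}$ plays the role of the ``$Y_l$-sequence.'' The conclusion of the lemma then produces a subsequence $\{X'_{k_i}, p'_{k_i, k_{i+1}}\}$ with $X'_{k_i} \in K$ for every $i$.

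The final step is to observe that this subsequence is still $G$-associated with $X$: both universal conditions in the definition of an associated inverse system are preserved under passage to cofinal subsystems, because any factorization or homotopy stabilization obtained at an index $k$ can always be pushed forward to a later index $k_i \geq k$ lying in the cofinal set $\{k_i\}$. Relabeling $X_k := X'_{k_k}$ and $p_{k,k+1} := p'_{k_k,k_{k+1}}$ then gives the desired $G$-ANR sequence. I do not anticipate any genuine obstacle here; the real work has already been done in the two preceding lemmas. The only point that requires attention is the bookkeeping when invoking Lemma~\ref{lemma-1} — the sequence whose terms are \emph{all} known to lie in $K$ (the one for $Y$) must occupy the ``$X_k$'' slot of the lemma, and not be confused with the target space $X$ of the theorem.
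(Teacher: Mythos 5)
Your proposal is correct and follows essentially the same route as the paper: apply Lemma~\ref{lemma-2} to the $G$-movable space $Y$ to obtain a sequence with all terms in $K$, then apply Lemma~\ref{lemma-1} with the roles swapped exactly as you describe to extract a subsequence, with terms in $K$, of a sequence $G$-associated with $X$. The paper's own proof is terser, leaving both the role-swap and the observation that a cofinal subsequence remains $G$-associated implicit, but the substance is identical.
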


\begin{proof}
Let $Y\in K$ be a $G$-movable metrizable compact space.
According to Lemma~\ref{lemma-2}, there exists an
inverse $G$-ANR sequence $\left\{Y_l, q_{l,l+1}\right\}$
$G$-associated with $Y$ such that $Y_l \in K$ for any
$k\in N$. Consider any compact metrizable $G$-space $X\in K$.
By Lemma~\ref{lemma-1}, any
$G$-ANR sequence $G$-associated with $X$ has a
subsequence of spaces belonging to the class $K$.
This completes the proof of the theorem.
\end{proof}

\begin{theorem}\label{th-7}
A second countable compact group $G$ is Lie if and only if
the class $wes(G)$ contains
a second countable  $G$-movable compact space.
\end{theorem}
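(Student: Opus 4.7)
The plan is to establish both directions. The forward direction (Lie implies $wes(G)$ contains a $G$-movable compact second countable space) is essentially formal: if $G$ is a compact Lie group then $G$ itself under left translation lies in $wes(G)$ via the identity, and as a smooth $G$-manifold it is a $G$-ANR, hence trivially $G$-movable (take $\{G\}$ as its own one-term associated system). So only the reverse direction requires work.

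For the reverse, let $X$ be a second countable compact $G$-movable $G$-space with $X\in wes(G)$. The strategy has two steps: first extract that $G$ itself is a $G$-ANR, then use the Pontryagin decomposition of $G$ to force $G$ to be Lie. For step one, apply Theorem~\ref{th-6} with $K=wes(G)$ to obtain an inverse $G$-ANR sequence $\{X_k,p_{k,k+1}\}$ $G$-associated with $X$ and with $X_k\in wes(G)$ for every $k$. By Theorem~\ref{th-1}, each $X_k$ decomposes as $A_k\times G$, where $A_k=X_k|G$ carries the trivial $G$-action. Fixing any $a_0\in A_k$, the closed invariant copy $\{a_0\}\times G\cong G$ is a $G$-retract of $X_k$ via the projection onto the second factor. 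Since a $G$-retract of a $G$-ANR is again a $G$-ANR (use the $G$-ANE property to extend the inclusion $G\hookrightarrow X_k$ over an invariant neighbourhood and postcompose with the retraction), $G$ itself is a $G$-ANR.

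For step two, invoke the Pontryagin representation $G=\lim_\leftarrow G/\tilde K_i$ already exploited in the proof of Theorem~\ref{th-2}, with the $\tilde K_i$ closed normal, $\bigcap_i\tilde K_i=\{e\}$, and each $G/\tilde K_i$ a compact Lie $G$-ANR (the $G$-action factoring through the quotient). This inverse system is $G$-associated with $G$: the quotient maps $q_i:G\to G/\tilde K_i$ are equivariant and commute strictly with the bonds, and the factorisation/uniqueness conditions follow from the standard density of $G$-ANR inverse systems with respect to a compact inverse limit. Since $G$ is itself a $G$-ANR, the factorisation property applied to $\mathrm{id}_G:G\to G$ produces some $i$ and an equivariant map $h:G/\tilde K_i\to G$ with $h\circ q_i\simeq_G\mathrm{id}_G$. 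Let $F:G\times I\to G$ witness this homotopy. Each slice $F(\cdot,t)$ is equivariant, and since $G$ acts on itself simply transitively, $F(g,t)=g\cdot F(e,t)$. At $t=1$ the slice factors through $G/\tilde K_i$, so for every $k\in\tilde K_i$,
\[
k\cdot F(e,1)=F(k,1)=h(k\tilde K_i)=h(\tilde K_i)=F(e,1),
\]
forcing $k=e$. Thus $\tilde K_i=\{e\}$ and $G\cong G/\tilde K_i$ is Lie.

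The main obstacle will be verifying that the Pontryagin system $\{G/\tilde K_i,q_{i,i+1}\}$ is genuinely $G$-associated with $G$ in the precise sense of the paper's associated system definition: this needs the $G/\tilde K_i$ to be $G$-ANRs under the quotient action, together with an equivariant density argument for compact inverse limits. The auxiliary claim that $G$-retracts of $G$-ANRs are $G$-ANRs is standard provided $G$-ANR coincides with $G$-ANE in the ambient class, but should also be checked explicitly.
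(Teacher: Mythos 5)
Your proof is correct, but its sufficiency half takes a genuinely different route from the paper's. The paper applies Lemma~\ref{lemma-1} (through Theorem~\ref{th-6}) directly to the Pontryagin sequence $\{G|K_i\}$, which is $G$-associated with $G$ itself: since $wes(G)$ contains a $G$-movable compactum, some $G|K_i$ must lie in $wes(G)$, and the conjugacy corollary following Theorem~\ref{th-1} then forces $K_i=\{e\}$, so $G=G|K_i$ is already Lie. You instead apply Theorem~\ref{th-6} to the given movable space $X$, invoke Theorem~\ref{th-1} to split each term of its associated sequence as $A_k\times G$, realize $G$ as a $G$-retract of a $G$-ANR and hence as a $G$-ANR, and only then bring in the Pontryagin system, closing the argument by playing its factorization property against the freeness of left translation. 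Your route is longer and rests on two auxiliary facts the paper never needs --- that a $G$-retract of a $G$-ANR is again a $G$-ANR (standard via the $G$-ANE characterization for compact $G$ acting on metrizable spaces, as you note) and that the Pontryagin system satisfies the factorization axiom (which the paper itself asserts when it declares that system $G$-associated with $G$, so you may simply cite it). What your detour buys is the clean intermediate characterization that a second countable compact group is Lie if and only if it is a $G$-ANR under its own translation action, together with an endgame that avoids Bredon's conjugacy result entirely. One small simplification: the homotopy $F$ is not needed in your final computation; any equivariant self-map of $G$ that factors through $G|\tilde K_i$ already forces $\tilde K_i=\{e\}$ by the same cancellation.
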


\begin{proof}
Necessity is obvious, because
any compact  Lie group is a $G$-ANR \cite{p60} and,
therefore, a $G$-movable space.

Let us prove sufficiency. Suppose that the class $wes(G)$
contains a second countable $G$-movable compact space.
By a theorem of Pontryagin \cite[p.~332]{pontr},
the identity element $e\in G$ is surrounded by decreasing closed normal
subgroups $K_1 \supset K_2 \supset \cdot \cdot \cdot $ such
that $G\left| {K_i }
\right.$ is a Lie group for any $i=1,2,\dots$
and $G=\lim\limits_{\longleftarrow}\left\{ {G\left| {K_i
,p_{i,i+1}} \right.} \right\}$, where the $p_{i,i+1} : G|K_{i+1} \to G|K_i$
are the natural epimorphisms
generated by the embeddings $K_{i+1} \subset K_i $.
The  group $G$ acts naturally on each $G\left| {K_i }\right.$;
all maps $p_{i,i+1}$ are $G$-equivariant with respect to these
actions, and the $G\left|
{K_i } \right.$  themselves are $G$-ANRs \cite{p60}. Thus,
the inverse sequence $\left\{G|K_i, p_{i,i+1}\right\}$ is
$G$-associated with the group $G$. By virtue of
Theorem~\ref{th-6}, the sequence $\left\{ {G\left| {K_i ,p_{i,i+1}} \right.} \right\}$
has a subsequence in which all spaces
are weakly equivariantly shape comparable with the group $G$.
This is possible only if $G\left| {K_i =G} \right.$ starting with some $i$.
Since $G\left|{K_i } \right.$ is a Lie group, the required assertion
follows.
\end{proof}

The last theorem implies directly the following criterion
for a second countable compact group to be a Lie group.

\begin{theorem}\label{th-main}
A second countable compact group is a Lie group
if and only if it is $G$-movable.
\end{theorem}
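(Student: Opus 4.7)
The plan is to derive this criterion as an immediate consequence of Theorem~\ref{th-7}, which already does essentially all the real work. What remains is to observe that one can take the witnessing space in the class $wes(G)$ to be $G$ itself.

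First, I would handle necessity. If $G$ is a compact Lie group acting on itself by translation, then $G$ is a $G$-ANR by \cite{p60}. Any $G$-ANR $X$ is $G$-movable: the trivial inverse system $\{X\}$ (indexed by a one-element set, with identity bonding map) is $G$-associated with $X$ and is trivially $G$-movable. Hence $G$ is $G$-movable.

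For sufficiency, suppose $G$ is $G$-movable. Since $G$ is assumed second countable and compact, $G$ is itself a second countable $G$-movable compact space. Clearly $G\in wes(G)$, since the identity $G$-map $G\to G$ induces $G$-shape morphisms in both directions, so $G$ is weakly equivariantly shape comparable with itself. Therefore the class $wes(G)$ contains a second countable $G$-movable compact space, namely $G$. Theorem~\ref{th-7} then yields that $G$ is a Lie group.

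The only potential obstacle is verifying that $G$ genuinely lies in $wes(G)$ and that the ``$G$-ANR implies $G$-movable'' step is legitimate in the framework set up in Section~2; both are formal consequences of the definitions and do not require any new argument. Thus the theorem is essentially a reformulation of Theorem~\ref{th-7} in the case where the witnessing $G$-movable compact space is taken to be $G$ itself.
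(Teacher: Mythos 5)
Your proof is correct and matches the paper's intent exactly: the paper derives Theorem~\ref{th-main} directly from Theorem~\ref{th-7} by taking the witnessing $G$-movable compact space in $wes(G)$ to be $G$ itself, and handles necessity via the same observation that a compact Lie group is a $G$-ANR and hence $G$-movable. No substantive difference from the paper's argument.
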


Theorem~\ref{th-main} gives new examples of movable but not $G$-movable
spaces. Indeed, as was shown by Keesling \cite{keesl75},
there exist compact connected Abelian groups which are movable
but not uniformly movable and, therefore, not Lie.
Later, Kozlovskii and Segal \cite{keesl74} constructed examples
of such groups. These groups are not $G$-movable by Theorem~\ref{th-main}.

\begin{corollary}\label{cor-1}
There exists a $G$-space which is not weakly equivariantly shape
comparable with any second countable compact $G$-movable space.
\end{corollary}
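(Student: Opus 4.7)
The plan is to exhibit the $G$-space by choosing both the group $G$ and the space cleverly, exploiting the characterization in Theorem~\ref{th-7}. Specifically, I would let $G$ be any second countable compact topological group that fails to be a Lie group; such groups exist in abundance (for instance, an infinite product of finite groups like $\prod_{n=1}^{\infty}\mathbb{Z}_{2}$, a solenoid, or the non-Lie movable compact connected Abelian groups of Keesling mentioned just above). Then I would take $X=G$ equipped with its natural (left translation) action as my candidate $G$-space.

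With this setup the argument is essentially a one-line application of the contrapositive of Theorem~\ref{th-7}. Since $G$ is second countable and compact but not Lie, Theorem~\ref{th-7} asserts that the weak equivariant shape comparability class $wes(G)$ contains no second countable $G$-movable compact space. By definition, if some second countable compact $G$-movable space $Z$ were weakly equivariantly shape comparable with $X=G$, then $Z$ would lie in $wes(G)$, contradicting Theorem~\ref{th-7}. Hence $X=G$ has the required property.

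I do not foresee a serious obstacle here: the entire substance has already been absorbed into Theorem~\ref{th-7}, and Theorem~\ref{th-main} only confirms that the examples of Keesling and of Kozlovskii--Segal supply concrete (even movable!) witnesses. The only minor point to be careful about is making clear that the class $wes(G)$ of the statement is taken with respect to this particular $G$ acting on itself, so that the deduction is immediate from Theorem~\ref{th-7} and no additional work — such as producing a $G$-shape morphism or invoking Theorem~\ref{th-2} directly — is needed.
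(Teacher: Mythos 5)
Your proposal is correct and is exactly the paper's argument: the paper also takes $X=G$ for any second countable compact non-Lie group and cites the contrapositive of Theorem~\ref{th-7}. No differences to report.
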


\begin{proof}
By virtue of Theorem~\ref{th-7}, any second countable compact
group not being a Lie group has the required property.
\end{proof}

\bigskip

\end{document}